\newtheorem{theorem}{Theorem}
\newtheorem{lemma}[theorem]{Lemma}
\newenvironment{proof}[1][Proof]{\noindent\textbf{#1.} }{\ \rule{0.5em}{0.5em}}
\begin{document}

\title{\bf The set of $p$-harmonic functions in $B_{1}$\\ is total in $C^{k}(\overline{B}_{1})$ }
\author{{\bf Jos\'e Villa-Morales}\\
Universidad Aut\'{o}noma de Aguascalientes\\
Departamento de Matem\'{a}ticas y F\'{\i}sica\\
Av. Universidad No. 940, Cd. Universitaria \\
Aguascalientes, Ags., C.P. 20131, M\'exico\\
\ttfamily{jvilla@correo.uaa.mx}}
\date{}
\maketitle

\begin{abstract}
Let $(-\Delta _{p})^{s}$, with $0<s<1<p<\infty$, be the fractional $p$-Laplacian operator. We prove that the span of $p$-harmonic functions in $B_{1}$ is dense in $C^{k}(\overline{B}_{1})$.

\vspace{0.5cm}
\noindent {\it Keywords:} Fractional $p$-Laplacian, $p$-harmonic functions, reflection with respect to an hyperplane.\\
{\it Mathematics Subject Classification:} MSC 35R11, 60G22, 35A35, 34A08.
\end{abstract}

\section{Introduction}

Recently much attention has been focused on the study of fractional operators. This is, in part, because these operators are taking an important role in applied mathematics. For example, they arise in fields like molecular biology \cite{SZF}, combustion theory \cite{CRS}, dislocations in mechanical systems \cite{IM}, crystals \cite{Toland} and in models of anomalous growth of certain fractal interfaces \cite{MW}, to name a few. 

Here we are going to study the fractional $p$-Laplacian. Before we start, let us fix some notation. By $B_{r}\subset \mathbb{R}^{d}$ we are going to denote the open ball with center at $0\in \mathbb{R}^{d}$ and radius $r>0$. Let $U\subset \mathbb{R}^{d}$ be an open set, and $k\in \mathbb{N}\cup \{0\}$,
\begin{eqnarray*}
C^{k}(\overline{U})&=&\{\varphi\in C^{k}(U) \ \vert \ D^{\alpha}\varphi \text{ is uniformly continuous on bounded subsets of } U,\\ &&\text{for all } |\alpha| \leq k\},
\end{eqnarray*}
and if $\varphi\in C^{k}(\overline{U})$ is bounded we write $||\varphi||_{C^{k}(\overline{U})}=\sum_{|\alpha|\leq k} ||D^{\alpha}\varphi||_{C(\overline{U})}$, where $C(\overline{U})=C^{0}(\overline{U})$ and  $||\varphi||_{C(\overline{U})}=\sup \{|\varphi(x)|:x\in U\}$.

The fractional $p$-Laplacian operator $(-\Delta _{p})^{s}$, with $0<s<1<p<\infty$, is defined, for $u:\mathbb{R}^{d} \rightarrow \mathbb{R}$ smooth enough, by 
\begin{equation}
(-\Delta _{p})^{s}u(x)=\text{P.V. }\int_{\mathbb{R}^{d}}\frac{|u(x)-u(x+y)|^{p-2}(u(x)-u(x+y))}{|y|^{d+sp}}dy, \label{DefFL}
\end{equation}
where the term P.V. stands for the (Cauchy) principal value. If we want to emphasize the dimension, where the operator is defined, we will write $(-\Delta _{p})_{d}^{s}.$  

We will say that a smooth function $u:\mathbb{R}^{d} \rightarrow \mathbb{R}$ is $p$-harmonic in $B_{1}$ if  $(-\Delta _{p})^{s}u(x)=0$ for each $x\in B_{1}$. 

\begin{theorem}\label{MainTh}
Let $M$ be the set of all $p$-harmonic functions in $B_{1}$. Given $k\in \mathbb{N}\cup \{0\}$, $f\in C^{k}(\overline{B}_{1})$ and $\varepsilon >0$ there exists $u\in\mbox{span} \ (M)$ such that $||f-u||_{C^{k}(\overline{B}_{1})}<\varepsilon .$
\end{theorem}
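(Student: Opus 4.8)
The plan is to produce an explicit, dimension-reduced family of genuine $p$-harmonic functions and to show that their linear span already approximates every polynomial; since polynomials are dense in $C^{k}(\overline{B}_{1})$, this gives the theorem. The starting point is a reduction identity obtained by integrating out the transverse variables in \eqref{DefFL}: for a unit vector $\theta$ and a profile $w:\mathbb{R}\to\mathbb{R}$, the ridge function $u(x)=w(\theta\cdot x)$ satisfies $(-\Delta_{p})_{d}^{s}u(x)=C_{d,s,p}\,(-\Delta_{p})_{1}^{s}w(\theta\cdot x)$, where $C_{d,s,p}=\int_{\mathbb{R}^{d-1}}(1+|z'|^{2})^{-(d+sp)/2}\,dz'>0$; this uses only the rotational invariance of the kernel $|y|^{-(d+sp)}$ and the substitution $y'=|y_{1}|z'$. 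Since $|\theta\cdot x|<1$ for $x\in B_{1}$, the function $u$ is $p$-harmonic in $B_{1}$ as soon as $w$ is one-dimensionally $p$-harmonic on $(-1,1)$. Thus it suffices to exhibit, in one dimension, a supply of functions $p$-harmonic on $(-1,1)$ whose span is dense in $C^{k}([-1,1])$.

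For the one-dimensional building block I would use a homogeneous solution. By the scaling $t\mapsto\lambda t$ one has $(-\Delta_{p})_{1}^{s}[t_{+}^{\beta}](t)=c(\beta)\,t^{\beta(p-1)-sp}$ for $t>0$, where $c(\beta)=(-\Delta_{p})_{1}^{s}[t_{+}^{\beta}](1)$. A sign analysis of this principal value shows $c(0^{+})>0$ whereas $c$ takes a negative value at some $\beta_{1}\in(0,1)$, so by continuity there is a (necessarily non-integer) exponent $\beta^{*}\in(0,1)$ with $c(\beta^{*})=0$; then $t_{+}^{\beta^{*}}$ is $p$-harmonic on $(0,\infty)$, and since $(-\Delta_{p})_{1}^{s}$ is preserved by the reflection $t\mapsto -t$, the function $(-t)_{+}^{\beta^{*}}$ is $p$-harmonic on $(-\infty,0)$. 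Translating, the functions $w_{a}(t)=(t-a)_{+}^{\beta^{*}}$ with $a<-1$ are $p$-harmonic on $(-1,1)$, smooth on a neighborhood of $[-1,1]$, and of admissible growth at infinity. For $|a|\ge 2$ the binomial series $w_{a}(t)=(-a)^{\beta^{*}}\sum_{m\ge 0}\binom{\beta^{*}}{m}(-t/a)^{m}$ converges in $C^{k}([-1,1])$, and since $\binom{\beta^{*}}{m}\neq0$ for every $m$, finite linear combinations over distinct values of $a$ solve a Vandermonde system that isolates each monomial $t^{m}$ up to an arbitrarily small $C^{k}$-tail. Hence $\operatorname{span}\{w_{a}\}$ is dense in $C^{k}([-1,1])$.

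Lifting back, for every $\theta\in S^{d-1}$ and $a<-1$ the ridge function $(\theta\cdot x-a)_{+}^{\beta^{*}}$ belongs to $M$, so by the previous paragraph $\operatorname{span}(M)$ approximates $(\theta\cdot x)^{m}$ in $C^{k}(\overline{B}_{1})$ for each $m$ and each direction $\theta$. Because the $m$-th powers of linear forms span the homogeneous polynomials of degree $m$, summing over $m\le N$ shows that $\overline{\operatorname{span}(M)}$ contains every polynomial. Since polynomials are dense in $C^{k}(\overline{B}_{1})$ (extend $f$ to a compactly supported $C^{k}$ function and apply Weierstrass-type approximation to $f$ together with its derivatives), it follows that $\operatorname{span}(M)$ is dense, which is the assertion.

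The essential difficulty is the nonlinearity of $(-\Delta_{p})^{s}$ for $p\neq 2$: solutions cannot be superposed, so $M$ is not a linear space and one cannot build rich harmonic functions by adding simple ones. The device that circumvents this is to work with a single explicit family of genuine solutions, the translated homogeneous profiles $(\theta\cdot x-a)_{+}^{\beta^{*}}$, and to recover all the needed flexibility from their linear span, which lives in $C^{k}(\overline{B}_{1})$ and is unconstrained by the equation. Accordingly, the step I expect to demand the most care is the construction of the building block: proving that the transcendental equation $c(\beta^{*})=0$ has a non-integer root in $(0,1)$ (equivalently, that $t_{+}^{\beta^{*}}$ is exactly $p$-harmonic and of admissible growth, which forces $\beta^{*}<sp/(p-1)$) requires a delicate estimate of the sign and convergence of the principal-value integral, and one must also verify the uniform $C^{k}$ control of the binomial tails so that the monomial extraction is valid in the $C^{k}$ norm rather than merely pointwise.
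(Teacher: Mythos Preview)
Your architecture matches the paper's: reduce to one dimension via ridge functions, find a one-sided power $t_{+}^{\beta^{*}}$ that is one-dimensionally $p$-harmonic on $(0,\infty)$, translate and rotate to obtain a family of $p$-harmonic functions on $B_{1}$, and use non-integrality of $\beta^{*}$ to extract every monomial from the span. The one substantive point you miss is that $\beta^{*}=s$ exactly: the paper proves by a direct integration-by-parts computation that $(-\Delta_{p})_{1}^{s}\bigl[t_{+}^{s}\bigr](1)=0$, so the intermediate-value step you single out as the hardest part is unnecessary, and your unproved sign assertion for $c(\beta_{1})$ can be dropped. Beyond that, the paper packages the approximation differently. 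Instead of your binomial/Vandermonde extraction together with ``powers of linear forms span homogeneous polynomials'', it parametrizes the family by $\xi\in\mathbb{R}^{d}\setminus\{0\}$ via $H_{\xi}(x)=(1+\langle x,\xi\rangle)^{s}$ (which is your family after writing $\xi=-a^{-1}\theta$), observes that $D^{\alpha}H_{\xi}(0)=s(s-1)\cdots(s-|\alpha|+1)\,\xi^{\alpha}$, and shows that the jets at $0$ of $p$-harmonic functions fill all of $\mathbb{R}^{N_{m}}$: otherwise a polynomial identity in $\xi$ would force all coefficients to vanish, impossible since $s\notin\mathbb{Z}$. It then matches each monomial $x^{\gamma}$ by a $p$-harmonic function with the correct Taylor jet to order $k+|\gamma|$ and rescales into $B_{1}$ using the homogeneity of the operator. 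The jet argument is tidier in that it bypasses the tail-control issue lurking in your Vandermonde step and treats all multi-indices at once; your route reaches the same conclusion once those details are filled in.
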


The fractional $p$-Laplacian represents a natural extension of fractional Laplacian ($p=2$). In the case of the fractional Laplacian the previous result was proved in \cite{DSV}. More precisely, if $p\neq 2 $ the fractional $p$-Laplacian is not a linear operator, but in the case $p=2$ the fractional Laplacian is a linear operator and it is proved in \cite{DSV} that $M$ is dense in $C^{k}(\overline{B}_{1})$.

In the case of fractional Laplacian, there are new proofs of Theorem \ref{MainTh}, see \cite{GSU, Kry, RS}. For more articles concerning the fractional $p$-Laplacian please see, for example,  \cite{CL, DBL, XHW} and the references therein.

The paper is organized as follows. In Section 2 we present some preliminary facts and in Section 3 we give the proof of Theorem \ref{MainTh}, which is based on \cite{BV, DSV, Val}.

\section{Preliminaries}

By $e^{1},...,e^{d}$ let us denote the canonical basis of $\mathbb{R}^{d}$ and by $\langle \cdot, \cdot\cdot\rangle$ the usual inner product in $\mathbb{R}^{d}$. Let us also introduce the function $W_{1}:\mathbb{R}\rightarrow \mathbb{R}$ as
\begin{equation}
W_{1}(t)=(\max \{0,t\})^{s} \label{defW1}
\end{equation}
and the sign function  $\sigma:\mathbb{R}\rightarrow \mathbb{R}$ by
\begin{equation*}
\sigma (t)=\left\{ 
\begin{array}{cc}
1, & t\geq 0, \\ 
-1, & t<0.
\end{array}
\right. 
\end{equation*}
Since $t=|t|\sigma (t)$ we have, for each $r\in \mathbb{R}$,
\begin{eqnarray*}
W_{1}(tr) &=&(\max \{0,|t|\sigma (t)r\})^{s}\\
&=&|t|^{s}W_{1}(\sigma (t)r).
\end{eqnarray*}
Let us also introduce the function  $W_{d}:\mathbb{R}^{d}\rightarrow \mathbb{R}$ as
\begin{equation*}
W_{d}(x)=W_{1}(\langle x,e^{d}\rangle ).
\end{equation*}

In what follows we will give a representation of $(-\Delta _{p})_{d}^{s}W_{d}$ in terms of $(-\Delta _{p})_{1}^{s}W_{1}$, a similar expression, in the case $p=2$, can be seen in \cite{Dyda}.

\begin{lemma}\label{Lmrepd}
If $d>1$, then, for each $x\in \mathbb{R}^{d}$,
\begin{equation*}
(-\Delta _{p})_{d}^{s}W_{d}(x)=\frac{a_{d-1}}{2}B\left( \frac{d-1}{2},\frac{sp+1}{2}\right) |\langle x,e^{d}\rangle |^{-s}(-\Delta_{p})_{1}^{s}W_{1}(\sigma (\langle x,e^{d}\rangle )),
\end{equation*}
where $a_{d-1}$ is the ($d-2$)-dimensional Lebesgue measure of the unit
sphere in $\mathbb{R}^{d-1}$ and $B$ is the usual Beta function.
\end{lemma}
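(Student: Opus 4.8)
The plan is to compute $(-\Delta_p)_d^s W_d(x)$ directly from the definition \eqref{DefFL} and reduce the $d$-dimensional integral to a one-dimensional one by integrating out the $d-1$ coordinates orthogonal to $e^d$. Write $x = (x', x_d)$ with $x' \in \mathbb{R}^{d-1}$ and $x_d = \langle x, e^d\rangle$, and similarly $y = (y', y_d)$. Since $W_d(x) = W_1(x_d)$ depends only on the last coordinate, the numerator $|W_d(x) - W_d(x+y)|^{p-2}(W_d(x) - W_d(x+y))$ equals $|W_1(x_d) - W_1(x_d + y_d)|^{p-2}(W_1(x_d) - W_1(x_d+y_d))$, which is independent of $y'$. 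Thus
\[
(-\Delta_p)_d^s W_d(x) = \text{P.V.}\int_{\mathbb{R}} |W_1(x_d) - W_1(x_d+y_d)|^{p-2}(W_1(x_d) - W_1(x_d+y_d)) \left( \int_{\mathbb{R}^{d-1}} \frac{dy'}{(|y'|^2 + y_d^2)^{(d+sp)/2}} \right) dy_d.
\]

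Next I would evaluate the inner integral. Passing to polar coordinates in $\mathbb{R}^{d-1}$ gives $\int_{\mathbb{R}^{d-1}} (|y'|^2 + y_d^2)^{-(d+sp)/2}\, dy' = a_{d-1}\int_0^\infty \rho^{d-2}(\rho^2 + y_d^2)^{-(d+sp)/2}\, d\rho$. The substitution $\rho = |y_d| t$ factors out $|y_d|^{d-2+1-(d+sp)} = |y_d|^{-(sp+1)}$ and leaves $a_{d-1}|y_d|^{-(sp+1)}\int_0^\infty t^{d-2}(t^2+1)^{-(d+sp)/2}\, dt$; the remaining integral is a standard Beta integral equal to $\tfrac12 B\!\left(\tfrac{d-1}{2}, \tfrac{sp+1}{2}\right)$ after the substitution $u = t^2/(1+t^2)$. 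So the inner integral equals $\tfrac{a_{d-1}}{2}B\!\left(\tfrac{d-1}{2},\tfrac{sp+1}{2}\right)|y_d|^{-(sp+1)}$.

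Substituting back, and using the homogeneity relation $W_1(tr) = |t|^s W_1(\sigma(t) r)$ established above with $t = x_d$ (assuming $x_d \neq 0$; the case $x_d = 0$ needs to be handled separately, or follows since both sides are defined pointwise for $x \in \mathbb{R}^d$ with the understanding that the stated formula is the relevant one), I would rewrite $W_1(x_d) - W_1(x_d + y_d) = W_1(x_d) - W_1\!\left(x_d(1 + y_d/x_d)\right)$. Setting $z = y_d/x_d$ and using the homogeneity to pull out $|x_d|^s$, together with $dy_d = |x_d|\, dz$ and $|y_d|^{-(sp+1)} = |x_d|^{-(sp+1)}|z|^{-(sp+1)}$, collects a total power $|x_d|^{s(p-2)+s - (sp+1) + 1} = |x_d|^{-s}$ in front, and the remaining principal-value integral in $z$ is precisely $(-\Delta_p)_1^s W_1(\sigma(x_d))$ — here one uses that $\sigma(x_d(1+z)) $ relates to $\sigma(x_d)$ correctly, or more cleanly that after the rescaling the one-dimensional integral defining $(-\Delta_p)_1^s$ is evaluated at the point $\sigma(x_d) \in \{-1, 1\}$. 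This yields the claimed identity.

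The main obstacle is making the principal-value manipulations rigorous: the integrand has a non-integrable singularity at $y = 0$ (equivalently $y_d = 0$ after the reduction, since $W_1$ is Hölder-$s$ and the kernel has order $d + sp$), so I must justify interchanging the P.V. limit with the $y'$-integration and track that the symmetric truncation $\{|y| > \delta\}$ in $\mathbb{R}^d$ transforms compatibly with the symmetric truncation $\{|y_d| > \delta'\}$ in $\mathbb{R}$. Since the inner $y'$-integral converges absolutely for each fixed $y_d \neq 0$, the cleanest route is to work with the truncated operators $(-\Delta_p)_{d,\delta}^s$, apply Fubini on $\{|y|>\delta\}$ (where everything is absolutely integrable), carry out the $y'$-integration, and then take $\delta \to 0^+$, checking that the resulting one-dimensional truncation domain shrinks to a set on which the one-dimensional P.V. exists — the latter being guaranteed because $W_1$ is smooth away from $0$ and the point $\sigma(x_d)$ avoids the singularity of $W_1$ at the origin.
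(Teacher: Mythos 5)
Your proposal is correct and follows essentially the same route as the paper: integrate out the $d-1$ coordinates orthogonal to $e^{d}$ (via polar coordinates and the scaling $\rho=|y_{d}|t$, yielding the Beta-function constant and the factor $|y_{d}|^{-(sp+1)}$), then use the homogeneity $W_{1}(tr)=|t|^{s}W_{1}(\sigma(t)r)$ and the rescaling $y_{d}\mapsto y_{d}/|x_{d}|$ to reduce to $(-\Delta_{p})_{1}^{s}W_{1}(\sigma(x_{d}))$ with the prefactor $|x_{d}|^{-s}$. The only difference is cosmetic (you integrate the orthogonal variables first, the paper pulls out the homogeneity first), and your closing remarks on justifying the P.V./Fubini interchange and the $x_{d}=0$ case are, if anything, more careful than the paper's treatment.
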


\begin{proof}
If $x=(x_{1},...,x_{d})$, with $x_{d}\neq 0$, and $y=(y_{1},...,y_{d})$ then
\begin{eqnarray*}
W_{d}(x+y) &=&W_{1}(x_{d}(1+(x_{d})^{-1}y_{d})) \\
&=&|x_{d}|^{s}W_{1}(\sigma (x_{d})(1+(x_{d})^{-1}y_{d})) \\
&=&|x_{d}|^{s}W_{1}(\sigma (x_{d})+|x_{d}|^{-1}y_{d}),
\end{eqnarray*}
therefore
\begin{eqnarray*}
(-\Delta _{p})^{s}W_{d}(x)
 &=&\text{P.V.}\int_{\mathbb{R}^{d}}\frac{
|W_{d}(x)-W_{d}(x+y)|^{p-2}(W_{d}(x)-W_{d}(x+y))}{|y|^{d+sp}}dy \\
&=&|x_{d}|^{sp-s}\text{ P.V.}\int_{\mathbb{R}^{d}}\frac{|W_{1}(\sigma
(x_{d}))-W_{1}(\sigma(x_{d})+|x_{d}|^{-1}y_{d})|^{p-2}}{|\sum_{i=1}^{d-1}(y_{i})^{2}+(y_{d})^{2}|^{(d+sp)/2}}\\
&&\times (W_{1}(\sigma(x_{d}))-W_{1}(\sigma (x_{d})+|x_{d}|^{-1}y_{d}))dy\\
&=&|x_{d}|^{sp-s}\text{ P.V.}\int_{\mathbb{R}}\int_{\mathbb{R}^{d-1}}\frac{|W_{1}(\sigma (x_{d}))-W_{1}(\sigma(x_{d})+|x_{d}|^{-1}y_{d})|^{p-2}}{|y_{d}|^{d+sp}\left\vert |\frac{1}{y_{d}}\xi
|^{2}+1\right\vert ^{(d+sp)/2}}\\
&&\times (W_{1}(\sigma (x_{d}))-W_{1}(\sigma
(x_{d})+|x_{d}|^{-1}y_{d}))d\xi dy_{d}.
\end{eqnarray*}
Introducing the change of variable $\zeta =(y_{d})^{-1}\xi $ we get
\begin{eqnarray*}
(-\Delta _{p})_{d}^{s}W_{d}(x)&=&|x_{d}|^{-1-s}\text{ P.V.}\int_{\mathbb{R}}\frac{|W_{1}(\sigma (x_{d}))-W_{1}(\sigma (x_{d})+|x_{d}|^{-1}y_{d})|^{p-2}}{||x_{d}|^{-1}y_{d}|^{1+sp}}\\
&&\times(W_{1}(\sigma (x_{d}))-W_{1}(\sigma(x_{d})+|x_{d}|^{-1}y_{d}))dy_{d} \int_{\mathbb{R}^{d-1}}\frac{d\zeta }{\left\vert |\zeta |^{2}+1\right\vert ^{(d+sp)/2}},
\end{eqnarray*}
and the change of variable $r=|x_{d}|^{-1}y_{d}$ yields
\begin{eqnarray*}
(-\Delta _{p})_{d}^{s}W_{d}(x)&=&|x_{d}|^{-s}\text{ P.V.}\int_{\mathbb{R}}
\frac{|W_{1}(\sigma (x_{d}))-W_{1}(\sigma (x_{d})+r))|^{p-2}}{|r|^{1+sp}}\\
&&\times (W_{1}(\sigma
(x_{d}))-W_{1}(\sigma (x_{d})+r))dr
\int_{\mathbb{R}^{d-1}}
\frac{d\zeta }{\left\vert |\zeta |^{2}+1\right\vert ^{(d+sp)/2}}.
\end{eqnarray*}
Since $d>1$, then
\begin{eqnarray*}
\int_{\mathbb{R}^{d-1}}\frac{d\zeta }{\left\vert |\zeta |^{2}+1\right\vert
^{(d+sp)/2}} &=&a_{d-1}\int_{0}^{\infty }t^{d-2}(t^{2}+1)^{-(d+sp)/2}dt \\
&=&\frac{a_{d-1}}{2}B\left( \frac{d-1}{2},\frac{sp+1}{2}\right) ,
\end{eqnarray*}
in the last equality we have used the change of variable $r=t^{2}/(1+t^{2})$ to get the usual definition of the Beta function (see \cite{Warma}). From this the results follows. \hfill
\end{proof}

\bigskip
Now let us find an elementary limit, essential in the evaluation of $(-\Delta_{p})_{1}^{s}W_{1}(1)$.
\begin{lemma}\label{calimt}
If $0<s\neq 1$ and $p\in \mathbb{R}$, then
\begin{equation*}
\lim_{\varepsilon \downarrow 0}\frac{(1-(1-\varepsilon
)^{s})^{p-1}-((1+\varepsilon )^{s}-1)^{p-1}}{\varepsilon ^{p}}%
=s^{p-1}(p-1)(1-s).
\end{equation*}
\end{lemma}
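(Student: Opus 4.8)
The plan is to compute the limit by a Taylor expansion of the numerator in the variable $\varepsilon$ near $0$ and then divide by $\varepsilon^p$. The key observation is that both bracketed terms in the numerator behave like $s\varepsilon$ to leading order, so each raised to the power $p-1$ behaves like $(s\varepsilon)^{p-1}$, and the difference of the two $(p-1)$-th powers then produces a quantity of order $\varepsilon^{p}$, matching the denominator. Concretely, I would write $(1\pm\varepsilon)^{s}=1\pm s\varepsilon+\frac{s(s-1)}{2}\varepsilon^{2}+o(\varepsilon^{2})$, so that
\begin{eqnarray*}
1-(1-\varepsilon)^{s}&=&s\varepsilon-\tfrac{s(s-1)}{2}\varepsilon^{2}+o(\varepsilon^{2})=s\varepsilon\left(1-\tfrac{s-1}{2}\varepsilon+o(\varepsilon)\right),\\
(1+\varepsilon)^{s}-1&=&s\varepsilon+\tfrac{s(s-1)}{2}\varepsilon^{2}+o(\varepsilon^{2})=s\varepsilon\left(1+\tfrac{s-1}{2}\varepsilon+o(\varepsilon)\right).
\end{eqnarray*}

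Next I would raise each expression to the power $p-1$ using $(1+x)^{p-1}=1+(p-1)x+o(x)$ with $x=\mp\frac{s-1}{2}\varepsilon+o(\varepsilon)$, obtaining
\begin{eqnarray*}
(1-(1-\varepsilon)^{s})^{p-1}&=&(s\varepsilon)^{p-1}\left(1-(p-1)\tfrac{s-1}{2}\varepsilon+o(\varepsilon)\right),\\
((1+\varepsilon)^{s}-1)^{p-1}&=&(s\varepsilon)^{p-1}\left(1+(p-1)\tfrac{s-1}{2}\varepsilon+o(\varepsilon)\right).
\end{eqnarray*}
Subtracting, the order-$\varepsilon^{p-1}$ terms cancel and the surviving term is $(s\varepsilon)^{p-1}\cdot\left(-(p-1)(s-1)\varepsilon+o(\varepsilon)\right)=-s^{p-1}(p-1)(s-1)\varepsilon^{p}+o(\varepsilon^{p})$. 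Dividing by $\varepsilon^{p}$ and letting $\varepsilon\downarrow 0$ gives $-s^{p-1}(p-1)(s-1)=s^{p-1}(p-1)(1-s)$, as claimed.

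The main point requiring care is the handling of the $o(\cdot)$ terms when the exponents $p-1$ and $p$ are not integers: the expansion $(1+x)^{\alpha}=1+\alpha x+o(x)$ is valid for any real $\alpha$ as $x\to 0$, but one must make sure the error terms are genuinely $o(\varepsilon)$ relative to the factored-out $(s\varepsilon)^{p-1}$, and that $s\varepsilon>0$ for small $\varepsilon>0$ so the powers are well defined (this uses $s>0$). The hypothesis $s\neq 1$ is not actually needed for the limit to exist — the formula simply gives $0$ when $s=1$ — but it is presumably retained because the companion computation of $(-\Delta_{p})_{1}^{s}W_{1}(1)$ requires it. I would present the argument cleanly by factoring $(s\varepsilon)^{p-1}$ out of the numerator at the first opportunity, reducing everything to the elementary limit $\lim_{\varepsilon\downarrow 0}\varepsilon^{-1}\big((1-g(\varepsilon))^{p-1}-(1+g(\varepsilon))^{p-1}\big)$ where $g(\varepsilon)=\frac{s-1}{2}\varepsilon+o(\varepsilon)$, which equals $-2(p-1)\cdot\frac{s-1}{2}=-(p-1)(s-1)$.
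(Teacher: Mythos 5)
Your proposal is correct, and it takes a genuinely different route from the paper. The paper substitutes $x=1/\varepsilon$, rewrites $x^{s}-(x-1)^{s}$ and $(x+1)^{s}-x^{s}$ as integrals via the fundamental theorem of calculus, expresses the difference of the two $(p-1)$-th powers as a further integral of $r^{p-2}$, and passes to the limit by dominated convergence; this machinery is there precisely to avoid juggling error terms with non-integer exponents. You instead use the second-order expansion $(1\pm\varepsilon)^{s}=1\pm s\varepsilon+\tfrac{s(s-1)}{2}\varepsilon^{2}+o(\varepsilon^{2})$, factor out $(s\varepsilon)^{p-1}$ (legitimate since $s\varepsilon>0$), and apply $(1+x)^{p-1}=1+(p-1)x+o(x)$, which is valid for every real exponent; the leading $(s\varepsilon)^{p-1}$ terms cancel and the order-$\varepsilon^{p}$ term survives, giving $s^{p-1}(p-1)(1-s)$. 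Your bookkeeping of the $o(\cdot)$ terms is sound: the $o(\varepsilon^{2})$ errors become $o(\varepsilon)$ after factoring out $s\varepsilon$, and $(s\varepsilon)^{p-1}o(\varepsilon)=o(\varepsilon^{p})$, so the division by $\varepsilon^{p}$ is harmless. Your approach is shorter and handles all real $p$ uniformly (the paper sets aside $p=1$, which is trivial in both treatments), while the paper's integral formulation trades brevity for having every limit passage packaged as a dominated-convergence statement. Your side remark that $s\neq 1$ is not needed for the limit itself is also correct: at $s=1$ the numerator vanishes identically and both sides are $0$.
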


\begin{proof}
Let us consider $p\neq 1$. Changing of variable $x=1/\varepsilon$ we want to calculate
\begin{eqnarray*}
l &=&\lim_{x\rightarrow \infty }x^{s+p-ps}\left[
(x^{s}-(x-1)^{s})^{p-1}-((x+1)^{s}-x^{s})^{p-1}\right]  \\
&=&\lim_{x\rightarrow \infty }s^{p-1}x\left[ \left( \int_{0}^{1}\left( 1+
\frac{z-1}{x}\right) ^{s-1}dz\right) ^{p-1}-\left( \int_{0}^{1}\left( 1+
\frac{z}{x}\right) ^{s-1}dz\right) ^{p-1}\right]  \\
&=&\lim_{x\rightarrow \infty }s^{p-1}(p-1)x\int_{\int_{0}^{1}\left( 1+\frac{z
}{x}\right) ^{s-1}dz}^{\int_{0}^{1}\left( 1+\frac{z-1}{x}\right)
^{s-1}dz}r^{p-2}dr \\
&=&\lim_{x\rightarrow \infty }s^{p-1}(p-1)x\int_{0}^{\int_{0}^{1}\left( 1+
\frac{z-1}{x}\right) ^{s-1}dz-\int_{0}^{1}\left( 1+\frac{z}{x}\right)
^{s-1}dz}\left( r+\int_{0}^{1}\left( 1+\frac{z}{x}\right) ^{s-1}dz\right)
^{p-2}dr \\
&=&\lim_{x\rightarrow \infty }s^{p-1}(p-1)\int_{0}^{x\left[
\int_{0}^{1}\left( 1+\frac{z-1}{x}\right) ^{s-1}dz-\int_{0}^{1}\left( 1+
\frac{z}{x}\right) ^{s-1}dz\right] }\left( \frac{y}{x}+\int_{0}^{1}\left( 1+
\frac{z}{x}\right) ^{s-1}dz\right) ^{p-2}dy
\end{eqnarray*}
in the last equality we have used the change of variable $y=xr$. Now let us work with
\begin{eqnarray*}
\int_{0}^{1}\left( 1+\frac{z-1}{x}\right)^{s-1}dz-\int_{0}^{1}\left( 1+\frac{z}{x}\right) ^{s-1}dz
&=&(1-s)\int_{0}^{1}\int_{1+\frac{z-1}{x}}^{1+\frac{z}{x}}r^{s-2}drdz \\
&=&\frac{1-s}{x}\int_{0}^{1}\int_{0}^{1}\left( 1+\frac{y+z-1}{x}\right) ^{s-2}dydz,
\end{eqnarray*}
in the last equality we have used the change of variable $y=x\left[ r-\left( 1+\frac{z-1}{x}\right) \right] $. Then 
\begin{equation*}
l =\lim_{x\rightarrow \infty
}s^{p-1}(p-1)\int_{0}^{(1-s)\int_{0}^{1}\int_{0}^{1}\left( 1+x^{-1}(y+z-1)\right) ^{s-2}dydz}\left( \frac{y}{x}+\int_{0}^{1}\left( 1+\frac{z}{x}\right) ^{s-1}dz\right) ^{p-2}dy,
\end{equation*}
and the limit follows from the dominated convergence theorem. \hfill
\end{proof}

\bigskip
To calculate $(-\Delta _{p})_{1}^{s}W_{1}(1)$ we follows the method introduced in \cite{BV}.
\begin{lemma}\label{Lmrepu}
Let $W_{1}$ be defined as in (\ref{defW1}), then $(-\Delta _{p})_{1}^{s}W_{1}(1)=0$.
\end{lemma}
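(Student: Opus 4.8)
The plan is to compute $(-\Delta_p)_1^s W_1(1)$ directly from the definition, exploiting that $W_1(t) = (\max\{0,t\})^s$ vanishes for $t \le 0$. Writing out the principal-value integral at $x = 1$, with $W_1(1) = 1$, we have
\begin{equation*}
(-\Delta_p)_1^s W_1(1) = \text{P.V.}\int_{\mathbb{R}} \frac{|1 - W_1(1+y)|^{p-2}(1 - W_1(1+y))}{|y|^{1+sp}}\,dy.
\end{equation*}
I would split the domain at $y = -1$: for $y > -1$ we have $W_1(1+y) = (1+y)^s$, while for $y < -1$ we have $W_1(1+y) = 0$, so the integrand there is simply $|y|^{-1-sp}$, which is integrable at $-\infty$ since $sp > 0$. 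The substitution $y \mapsto -y$ on a piece of the remaining integral, or more directly splitting the $y>-1$ region into $(-1,0)$ and $(0,\infty)$, lets me pair up the singular behavior near $y = 0$.

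The key computational step is to recognize that the apparent singularity at $y=0$ is only mild enough to be a genuine principal value, and that the symmetric combination must be handled via the $\varepsilon$-truncation $\int_{|y|>\varepsilon}$ and a limit. After the substitutions, the divergent-looking contributions from $(-\varepsilon,0)$ and $(0,\varepsilon)$ combine into an expression whose numerator is exactly of the form appearing in Lemma \ref{calimt}: near $y = 0$, $1 - (1-y)^s \approx sy$ and $(1+y)^s - 1 \approx sy$ to leading order, so the difference of their $(p-1)$-st powers divided by $y^p$ is precisely the quantity controlled by that lemma. I expect the bookkeeping to show that, after removing the convergent tails, the remaining limit is a finite multiple of
\begin{equation*}
\lim_{\varepsilon \downarrow 0}\frac{(1-(1-\varepsilon)^s)^{p-1} - ((1+\varepsilon)^s - 1)^{p-1}}{\varepsilon^p},
\end{equation*}
times a factor that must vanish — or else the two sides of the $\varepsilon$-truncated integral cancel by an exact symmetry, forcing the value $0$.

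The main obstacle will be the careful matching of the principal-value truncation: one must verify that the "outer" parts of the integral (away from both $y=0$ and $y=-1$) assemble into something that either cancels identically or is manifestly zero, so that the entire value of $(-\Delta_p)_1^s W_1(1)$ reduces to the single limit of Lemma \ref{calimt}. Following the method of \cite{BV}, I would organize this as: (i) change variables in the $y \in (-1,\infty)$ integral to put it over $(0,\infty)$ with integrand involving $(1 - t^s)$-type terms; (ii) isolate the contribution near the removable points; (iii) identify the $\varepsilon$-dependent boundary terms with the ratio in Lemma \ref{calimt}; (iv) observe that the prescribed principal-value cancellation kills that term as well, leaving $(-\Delta_p)_1^s W_1(1) = 0$. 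The delicate point is step (iii)–(iv): showing that the finite limit produced by Lemma \ref{calimt}, when multiplied by its geometric prefactor, contributes zero to the final sum rather than a nonzero constant — this is where the precise value $s^{p-1}(p-1)(1-s)$ and the exact coefficients from the symmetric splitting must conspire.
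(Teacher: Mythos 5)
Your setup is fine (splitting at $y=-1$, noting the left tail integrand is $|y|^{-1-sp}$, pairing $(-1,0)$ with $(0,\cdot)$ so the combined numerator $(1-(1-r)^{s})^{p-1}-((1+r)^{s}-1)^{p-1}$ appears), but the proposal stops exactly where the actual work begins, and the two mechanisms you offer for why the result should be $0$ are both wrong. There is no ``exact symmetry'' in the $\varepsilon$-truncated integral: the principal-value piece over $(-1,1)$ is a genuinely nonzero quantity, and the left tail contributes the nonzero constant $\int_{-\infty}^{-1}|r|^{-1-sp}dr=\frac{1}{sp}$, which must be cancelled against the other pieces. Nor does the value reduce to ``the limit of Lemma \ref{calimt} times a factor that vanishes'': in the paper's argument Lemma \ref{calimt} plays only the auxiliary role of showing that the $\varepsilon$-boundary term produced by integration by parts, namely $\varepsilon^{-sp}\bigl[(1-(1-\varepsilon)^{s})^{p-1}-((1+\varepsilon)^{s}-1)^{p-1}\bigr]\sim s^{p-1}(p-1)(1-s)\,\varepsilon^{p(1-s)}$, tends to $0$ (and, incidentally, that the principal value converges). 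The zero does not come from that limit at all.

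What is missing is the actual cancellation scheme: (i) integrate by parts (against $d(r^{-sp})$) both in the outer integral over $(1,\infty)$ and in the paired principal-value integral over $(0,1)$, producing matching boundary values $(2^{s}-1)^{p-1}$ and the constants $\pm\frac{1}{sp}$; (ii) apply the change of variable $t=r/(1-r)$ to convert
\begin{equation*}
\int_{0}^{1}\frac{(1-(1-r)^{s})^{p-2}(1-r)^{s-1}}{r^{sp}}\,dr
=\int_{0}^{\infty}\frac{(1+t)^{s-1}((1+t)^{s}-1)^{p-2}}{t^{sp}}\,dt,
\end{equation*}
which is the step that lets the bulk integrals from the two integrations by parts cancel each other exactly; (iii) add the three pieces and check that everything — the constants, the $(2^{s}-1)^{p-1}$ terms, and the two identical improper integrals — cancels to give $0$. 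Without (i)–(iii) your outline is a plausible plan but not a proof, and as written it predicts the wrong structure for where the zero comes from; you should carry out this bookkeeping explicitly (or find a substitute argument that actually exhibits the cancellation).
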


\begin{proof}
From the definition (\ref{DefFL}) we have 
\begin{eqnarray*}
(-\Delta _{p})_{1}^{s}W_{1}(1) 
&=&\int_{-\infty }^{-1}\frac{1}{|r|^{1+sp}}dr-\int_{1}^{\infty }\frac{%
((1+r)^{s}-1)^{p-1}}{|r|^{1+sp}}dr \\
&&+\text{P.V. }\int_{-1}^{1}\frac{|1-(1+r)_{+}^{s}|^{p-2}(1-(1+r)_{+}^{s})}{%
|r|^{1+sp}} \\
&=&I_{1}-I_{2}+I_{3}.
\end{eqnarray*}
Let us calculate each integral. For the first integral $I_{1}=(sp)^{-1}$. In the second integral we use integration by parts to get
\begin{eqnarray*}
I_{2} &=&-\frac{1}{sp}\int_{1}^{\infty }((1+r)^{s}-1)^{p-1}d(r^{-sp}) \\
&=&\frac{1}{sp}\left\{ (2^{s}-1)^{p-1}+s(p-1)\int_{1}^{\infty }\frac{(1+r)^{s-1}((1+r)^{s}-1)^{p-2}}{r^{sp}}dr\right\} .
\end{eqnarray*}

Now, for the third integral we use integration by parts and Lemma \ref{calimt}
\begin{eqnarray*}
I_{3} &=&\lim_{\varepsilon \downarrow 0}\left( \int_{-1}^{-\varepsilon }
\frac{(1-(1+r)^{s})^{p-1}}{|r|^{1+sp}}dr-\int_{\varepsilon }^{1}\frac{
((1+r)^{s}-1)^{p-1}}{r^{1+sp}}dr\right)  \\
&=&\lim_{\varepsilon \downarrow 0}\int_{\varepsilon }^{1}\frac{
(1-(1-r)^{s})^{p-1}-((1+r)^{s}-1)^{p-1}}{r^{1+sp}}dr \\
&=&-\frac{1}{sp}\lim_{\varepsilon \downarrow 0}\int_{\varepsilon }^{1}\left[
(1-(1-r)^{s})^{p-1}-((1+r)^{s}-1)^{p-1}\right] d(r^{-sp}) \\
&=&-\frac{1}{sp}\left\{1-(2^{s}-1)^{p-1}-s(p-1)\int_{0}^{1}\frac{(1-(1-r)^{s})^{p-2}(1-r)^{s-1}}{r^{sp}}dr \right. \\
&&+\left. s(p-1)\int_{0}^{1}\frac{((1+r)^{s}-1)^{p-2}(1+r)^{s-1}}{r^{sp}}dr \right \}.
\end{eqnarray*}
Using the change of variable $t=r/(1-r)$ we get
\begin{equation*}
\int_{0}^{1}\frac{(1-(1-r)^{s})^{p-2}(1-r)^{s-1}}{r^{sp}}dr=\int_{0}^{\infty
}\frac{(1+t)^{s-1}((1+t)^{s}-1)^{p-2}}{t^{sp}}dt.
\end{equation*}
Substituting this in $I_{3}$ we obtain
\begin{equation*}
I_{3}=\frac{1}{sp}\left\{(2^{s}-1)^{p-1}-1+s(p-1)\int_{1}^{\infty }\frac{(1+r)^{s-1}((1+r)^{s}-1)^{p-2}}{r^{sp}}dr\right\}.
\end{equation*}
Adding the three integrals we get the desired result.\hfill
\end{proof}

\section{Proof of the main result}

Let $\xi \in \mathbb{R}^{d}\backslash \{0\}$. If $\xi \neq -|\xi |e^{d}$ we
consider the reflection $R_{\xi }:\mathbb{R}^{d}\rightarrow \mathbb{R}^{d}$,%
\begin{equation*}
R_{\xi }(x)=\frac{2\langle |\xi |e^{d}+\xi ,x\rangle }{|\ |\xi |e^{d}+\xi \
|^{2}}(|\xi |e^{d}+\xi )-x,
\end{equation*}
with respect to the hyperplane $H_{\xi }=\{x\in \mathbb{R}^{d}:\langle
x,|\xi |e^{d}-\xi \rangle =0\}$. On the other hand, if $\xi =-|\xi |e^{d}$
then we consider the reflection $R_{\xi }:\mathbb{R}^{d}\rightarrow \mathbb{R}%
^{d}$,
\begin{equation*}
R_{\xi }(x)=x-2\langle x,e^{d}\rangle e^{d},
\end{equation*}
with respect to the hyperplane $H_{\xi }=\{x\in \mathbb{R}^{d}:\langle
x,e^{d}\rangle =0\}$. In any case, we have 
\begin{equation}
R_{\xi }(\xi )=|\xi |e^{d} \label{pr1}
\end{equation}
 and moreover 
\begin{equation}
\langle R_{\xi }(x),R_{\xi }(y)\rangle =\langle x,y\rangle, \ \ \text{for
all }x,y\in \mathbb{R}^{d}. \label{pr2}
\end{equation}

\begin{proof}[Proof of Theorem \ref{MainTh}]
For each $\xi \in \mathbb{R}^{d}\backslash \{0\}$ let us consider the
function $H_{\xi }:\mathbb{R}^{d}\rightarrow \mathbb{R}$ defined as
\begin{equation*}
H_{\xi }(x)=\left( \max \left\{ 0,\left\langle \frac{\xi }{|\xi |^{2}}+x,\xi
\right\rangle \right\} \right) ^{s}.
\end{equation*}
From (\ref{pr1}) and (\ref{pr2}) we have
\begin{eqnarray*}
H_{\xi }(x) &=&\left( \max \left\{ 0,\left\langle R_{\xi }\left( \frac{\xi }{
|\xi |^{2}}+x\right) ,R_{\xi }(\xi )\right\rangle \right\} \right) ^{s} \\
&=&|\xi |^{s}\left( \max \left\{ 0,\left\langle R_{\xi }\left( \frac{\xi }{
|\xi |^{2}}+x\right) ,e^{d}\right\rangle \right\} \right) ^{s} \\
&=&|\xi |^{s}W_{d}\left( R_{\xi }\left( \frac{\xi }{|\xi |^{2}}+x\right)
\right) .
\end{eqnarray*}
By Lemma \ref{Lmrepd} we get 
\begin{eqnarray*}
(-\Delta _{p})_{d}^{s}H_{\xi }(x) &=&|\xi |^{s(p-1)}(-\Delta
_{p})_{d}^{s}W_{d}\left( R_{\xi }\left( \frac{\xi }{|\xi |^{2}}+x\right)
\right)  \\
&=&c|\xi |^{s(p-1)}\left\vert \left\langle R_{\xi }\left( \frac{\xi }{|\xi
|^{2}}+x\right) ,e^{d}\right\rangle \right\vert ^{-s} \\
&&\times (-\Delta _{p})_{1}^{s}W_{1}\left( \sigma \left( \left\langle R_{\xi
}\left( \frac{\xi }{|\xi |^{2}}+x\right) ,e^{d}\right\rangle \right) \right),
\end{eqnarray*} 
where $c>0$ is a constant. By Lemma \ref{Lmrepu}, $(-\Delta _{p})_{d}^{s}H_{\xi }(x)=0$ if $\left\langle R_{\xi }\left( |\xi |^{-2}\xi +x\right) ,e^{d}\right\rangle >0$. Since
\begin{equation*}
\left\langle R_{\xi }\left( \frac{\xi }{|\xi |^{2}}+x\right),e^{d}\right\rangle =
\frac{1}{|\xi|}\left\langle R_{\xi }\left( \frac{\xi }{
|\xi |^{2}}+x\right) ,R_{\xi }(\xi )\right\rangle
=\frac{1}{|\xi|}(1+\langle \xi ,x\rangle),
\end{equation*}
then $(-\Delta _{p})_{d}^{s}H_{\xi }=0$ on $V_{\xi }=\{x\in \mathbb{R}^{d}:\langle \xi ,x\rangle >-1\}$.  In this way, given $\xi \in \mathbb{R}^{d}\backslash \{0\}$, we have $H_{\xi }(x)=\left( \left\langle |\xi |^{-2}\xi +x,\xi \right\rangle\right) ^{s}=\left(1+\left\langle x,\xi\right\rangle \right) ^{s}$ and 
\begin{equation}
D^{\alpha }H_{\xi }(x)=s(s-1)\cdots (s-|\alpha |+1)\left( 1+\left\langle x,\xi\right\rangle \right) ^{s-|\alpha |}\xi ^{\alpha },
\label{derideH}
\end{equation}
for each $x\in V_{\xi }$ and $\alpha \in (\mathbb{N}\cup \{0\})^{d}$. 

Now let us consider the linear space
\begin{equation*}
\mathcal{V}=\{v:\mathbb{R}^{d}\rightarrow \mathbb{R},\ v \text{ is smooth and }p\text{-harmonic in some neighborhood of }0\}.
\end{equation*}
For each $m\in \mathbb{N}\cup \{0\}$ we are going to denote by $N_{m}$ the number of elements of the set $I_{m}=\{\alpha \in (\mathbb{N}\cup \{0\})^{d}:|\alpha|\leq m\}$. Let us enumerate $I_{m}$ as $\{\beta _{1},...,\beta _{N_{m}}\}$ and define the subset $\mathcal{V}_{N_{m}}$ of $\mathbb{R}^{N_{m}}$ as
\begin{equation*}
\mathcal{V}_{N_{m}}=\{(D^{\beta _{1}}v(0),...,D^{\beta _{N_{m}}}v(0)):v\in \mathcal{V}\}.
\end{equation*}
The set $\mathcal{V}_{N_{m}}$ is a linear subspace of $\mathbb{R}^{N_{m}}$. We claim that $\mathcal{V}_{N_{m}}=\mathbb{R}^{N_{m}}$. To prove this we
proceed by contradiction, as in \cite{Val}. Suppose that $\mathcal{V}_{N_{m}}\subsetneqq \mathbb{R}^{N_{m}}$. Thus there exists $u=(u_{1},...,u_{N_{m}})\in \mathbb{R}^{N_{m}}$, with $|u|=1$, such that $\mathcal{V}_{N_{m}}$ is contained in the hyperplane $\{x\in \mathbb{R}^{N_{m}}:\langle x,u\rangle =0\}$. 

By the previous discussion we know, that for each $\xi \in \mathbb{R}^{d}\backslash \{0\}$,
\begin{equation*}
(D^{\beta _{1}}H_{\xi }(0),...,D^{\beta _{N_{m}}}H_{\xi }(0))\in \mathcal{V}_{N_{m}},
\end{equation*}
therefore (\ref{derideH}) implies
\begin{equation*}
0=\sum_{i=1}^{N_{m}}u_{i}s(s-1)\cdots (s-|\beta _{i}|+1)\xi ^{\beta _{i}},\ \text{for} \ \xi \in \mathbb{R}^{d}\backslash \{0\}.
\end{equation*}
In this way (because $\mathbb{R}^{d}\backslash \{0\}$ is an open set, see \cite{DSV})
\begin{equation*}
u_{i}s(s-1)\cdots (s-|\beta _{i}|+1)=0,\text{ for }i\in \{1,...,N_{m}\},
\end{equation*}
but $s\in (0,1)$ implies $u=0$, contradicting $|u|=1$. 

Now let us see the set of $p$-harmonic functions is total in $C^{k}(\overline{B}_{1})$. Let $f\in C^{k}(\overline{B}_{1})$ and $\varepsilon >0$. By a density theorem (see, for example, Corollary 6.3 and Proposition 7.1 in the Appendixes of \cite{EK}) there exists a polynomial 
\begin{equation*}
p_{\varepsilon }(x)=\sum_{i=1}^{n_{\varepsilon }}c_{i}x^{\gamma _{i}},
\end{equation*}
with $c_{i}\in \mathbb{R}$, $\gamma _{i}\in (\mathbb{N}\cup \{0\})^{d}$ such that 
\begin{equation}
||f-p_{\varepsilon }||_{C^{k}(\overline{B}_{1})}<\frac{\varepsilon }{2}.  \label{apxpol}
\end{equation}
Let $i\in \{1,...,n_{\varepsilon }\}$ and take $m_{i}=k+|\gamma _{i}|$. Let
us enumerate $I_{m_{i}}$ as $\{\beta _{1},...,\beta _{N_{m_{i}}}\}$, where $\beta _{N_{m_{i}}}=\gamma _{i}$. Since $(0,...,0,\gamma _{i}!)\in \mathbb{R}^{N_{m_{i}}}$ there exists $v_{i}\in \mathcal{V}$ such that $D^{\alpha }v_{i}(0)=\gamma _{i}!1_{\{\gamma _{i}\}}(\alpha )$, for all $\alpha \in I_{m_{i}}$. Let us suppose, the smooth function, $v_{i}$ is $p$-harmonic in $B_{r_{i}}$. Then, let us consider the function $\tilde{v}_{i}:B_{1}\rightarrow \mathbb{R}$ defined as
\begin{equation*}
\tilde{v}_{i}(x)=\frac{1}{(\tilde{r}_{i})^{|\gamma _{i}|}}v_{i}(\tilde{r}_{i}x),
\end{equation*}
where
\begin{equation*}
\tilde{r}_{i}=\min \left\{ 1,r_{i},\frac{\varepsilon }{2}\left(\sum_{i=1}^{n_{\varepsilon} }|c_{i}|\ c(\gamma _{i})\right) ^{-1}\right\},
\end{equation*}
and $c(\gamma _{i})$ is defined in (\ref{defci}). The function $\tilde{v}_{i}$ is $p$-harmonic in $B_{1}$ and $D^{\alpha }\tilde{v}_{i}(0)=\gamma _{i}!1_{\{\gamma _{i}\}}(\alpha ),\ $for all $\alpha
\in I_{m_{i}}$. Let $g_{i}(x)=\tilde{v}_{i}(x)-x^{\gamma _{i}}$, $x\in B_{1}$, thus $D^{\alpha }g_{i}(0)=0$, for all $\alpha \in I_{m_{i}}$. 

If $\alpha \in (\mathbb{N}\cup \{0\})^{d}$ with $|\alpha |\leq k$, the Taylor theorem applied to $D^{\alpha }g_{i}$ at $0$ yields
\begin{equation*}
D^{\alpha }g_{i}(x)=\sum_{|\gamma |\leq |\gamma _{i}|}\frac{D^{\alpha
+\gamma }g_{i}(0)}{\gamma !}x^{\gamma }+\sum_{|\gamma |=|\gamma _{i}|+1}%
\frac{D^{\alpha +\gamma }g_{i}(tx)}{\gamma !}x^{\gamma },
\end{equation*}%
for some $t\in (0,1)$. If $|\gamma |=|\gamma _{i}|+1$, then $|\alpha +\gamma
|>|\gamma _{i}|$, thus if $z\in B_{1}$%
\begin{eqnarray*}
D^{\alpha +\gamma }g_{i}(z) &=&D^{\alpha +\gamma }\tilde{v}_{i}(z) \\
&=&(\tilde{r}_{i})^{|\alpha +\gamma |-|\gamma _{i}|}(D^{\alpha +\gamma
})v_{i}(\tilde{r}_{i}z) \\
&=&(\tilde{r}_{i})^{|\alpha |+1}(D^{\alpha +\gamma })v_{i}(\tilde{r}_{i}z).
\end{eqnarray*}
In particular, for $z=tx$, $x\in B_{1}$, we obtain  (using $|\alpha+\gamma|\leq m_{i}$, when $|\gamma|\leq |\gamma_{i}|$)
\begin{eqnarray}
|D^{\alpha }g_{i}(x)| &=&\left\vert \sum_{|\gamma |=|\gamma _{i}|+1}\frac{
D^{\alpha +\gamma }g_{i}(tx)}{\gamma !}x^{\gamma }\right\vert  \nonumber \\
&\leq &\sum_{|\gamma |=|\gamma _{i}|+1}\frac{(\tilde{r}_{i})^{|\alpha
|+1}||v_{i}||_{C^{|\alpha |+|\gamma _{i}|+1}(\overline{B}_{r_{i}})}}{\gamma !} \nonumber \\
&\leq &\left( \sum_{|\gamma |=|\gamma _{i}|+1}\frac{|}{\gamma !}\right)
||v_{i}||_{C^{k+|\gamma _{i}|+1}(\overline{B}_{r_{i}})}\tilde{r}_{i}=:c(\gamma _{i})\tilde{r}_{i}, \label{defci}
\end{eqnarray}
we have used $\tilde{r}_{i}\in (0,1)$. Then, by (\ref{apxpol}), 
\begin{eqnarray*}
\left\Vert f-\sum_{i=1}^{n_{\varepsilon }}c_{i}\tilde{v}_{i}\right\Vert
_{C^{k}(\overline{B}_{1})} &\leq &\frac{2}{\varepsilon }+\left\Vert
\sum_{i=1}^{n_{\varepsilon }}c_{i}x^{\gamma _{i}}-\sum_{i=1}^{n_{\varepsilon
}}c_{i}\tilde{v}_{i}\right\Vert _{C^{k}(\overline{B}_{1})} \\
&\leq &\frac{2}{\varepsilon }+\sum_{i=1}^{n_{\varepsilon }}|c_{i}|\left\Vert
g_{i}\right\Vert _{C^{k}(\overline{B}_{1})} \\
&<&\varepsilon .
\end{eqnarray*}
Therefore $f$ is approximated by $\sum_{i=1}^{n_{\varepsilon }}c_{i}\tilde{v}_{i}$, that belongs to the span of $p$-harmonic functions in $B_{1}$.\hfill
\end{proof}

\section*{Acknowledgment} The author was partially supported by the grant PIM20-1 of Universidad Aut\'{o}noma de Aguascalientes.

\end{document}